\theoremstyle{plain}
\newtheorem{theorem}{\bf Theorem}[section]
\newtheorem{lemma}[theorem]{Lemma}
\newtheorem{proposition}[theorem]{Proposition}
\theoremstyle{definition}
\newtheorem{remark}[theorem]{Remark}
\newtheorem{definition}[theorem]{Definition}
\newtheorem{algorithm}[theorem]{Algorithm}
\newcommand{\kk}{\Bbbk }
\newcommand{\depth}{\operatorname{depth} }
\newcommand{\reg}{\operatorname{reg} }
\newcommand{\pdim}{\operatorname{projdim} }
\newcommand{\Ker}{\operatorname{Ker} }
\newcommand{\lcm}{\operatorname{lcm} }
\newcommand{\sign}{\operatorname{sign} }
\renewcommand{\Im}{\operatorname{Im}}
\newcommand{\C}{\mathcal{C}}
\newcommand{\LL}{\mathcal{L}}
\newcommand{\K}{\mathcal{K}}
\newcommand{\ZZ}{\mathbb{Z}}
\newcommand{\BB}{\mathbb{B}}
\newcommand{\NN}{\mathbb{N}}
\newcommand{\Lyu}{\operatorname{Lyu} }
\newcommand{\Hom}{\mathrm{Hom}}
\newcommand{\D}{\Delta}
\begin{document}

\title{An alternative algorithm for computing the Betti table of a monomial ideal}
\author[M.~Torrente]{Maria-Laura Torrente}
\address{ Dipartimento di Matematica,
 Universit\`a di Genova,
 Via Dodecaneso 35, Genova 16146, Italy}
\email{torrente@dima.unige.it}

\author[M.~Varbaro]{Matteo Varbaro}
\address{ Dipartimento di Matematica,
 Universit\`a di Genova,
 Via Dodecaneso 35, Genova 16146, Italy}
\email{varbaro@dima.unige.it}

\date{}
\maketitle

\begin{abstract}
In this paper we develop a new technique to compute the Betti table of a monomial ideal.
We present a prototype implementation of the resulting algorithm and we perform numerical experiments suggesting a very promising efficiency. On the way of describing the method, we also prove new constraints on the shape of the possible Betti tables of a monomial ideal.
\end{abstract}

\section{Introduction}

Since many years syzygies, and more generally free resolutions, are central in purely theoretical aspects of algebraic geometry;
more recently, after the connection between algebra and statistics have been initiated by Diaconis and Sturmfels in \cite{DS}, free resolutions have also become an important tool in statistics 
(for instance, see \cite{D, SW}).  
As a consequence, it is fundamental to have efficient algorithms to compute them. 
The usual approach uses Gr\"obner bases and exploits a result of Schreyer (for more details see 
\cite{Sc1,Sc2} or \cite[Chapter 15, Section 5]{Ei}). The packages for free resolutions of the most used computer algebra 
systems, like \cite{M2, singular, cocoa}, are based on these techniques. 
In this paper, we introduce a new algorithmic method to compute the minimal graded free resolution of any finitely generated graded module over a polynomial ring such that some (possibly nonminimal) graded free resolution is known a priori. We describe this method and we present the resulting algorithm in the case of monomial ideals in a polynomial ring, in which situation we always have a starting nonminimal graded free resolution. 
A first implementation of the algorithm has already been tested and has led to  promising results. It is our opinion that, 
by accurately refining the implementation, it is possible to achieve substantially better performance/timings than the already good ones shown 
in Table \ref{table}.

Monomial ideals are, essentially, combinatorial objects, however the combinatorics of their minimal free resolutions is still mysterious. In the literature, the minimal free resolution of several classes of monomial ideals have been studied and fully understood. 
Many of these resolutions are {\it cellular}, i.e. they come from a (labeled) regular cell complex built by the generators of the monomial ideal (see \cite{BS} or \cite[Chapter 4]{MS}). Due to this reason, cellular resolutions have been intensively studied in the recent literature, giving rise to a new interesting area of research. However, not every monomial ideal admits a minimal cellular resolution, though all monomial ideals admit some (not necessarily minimal) cellular resolution. In this paper, we  use algebraic discrete Morse theory to algorithmically reduce a cellular resolution of a given monomial ideal to its minimal free resolution. We explicitly describe the algorithm when the starting cellular resolution is the free resolution found by Lyubeznik in \cite{Ly}, which is actually simplicial (i.e. the regular cell complex is simplicial). Our implementation of the algorithm is provided in this case.

On the way of writing the algorithm, we prove some rigidity properties, like the following vanishing for monomial ideals generated in degrees $\leq d$ (Theorem \ref{thm:subadditivity}):
\[\beta_{i,k}(S/I)=0 \ \ \forall \ k=j,\ldots ,j+d-1 \ \ \ \implies \ \ \ \beta_{i+1,j+d}(S/I)=0. \] 
This strengthens the recent result proved by Herzog and Srinivasan in \cite{HS} saying that, if $\beta_{i,k}(S/I)=0  \ \forall \ k\geq j$, then $\beta_{i+1,k+d}(S/I)=0   \ \forall \ k\geq j$. Theorem \ref{thm:subadditivity} has independently been proved, by different techniques, in \cite{Ya}. 

Finally, a similar idea could be used to compute the simplicial homology with coefficients in a field of a finite simplicial complex (see Remark \ref{rem:simplicial homology} for more details). Already many algorithms are available for this task; nevertheless, we think it would be worth to inquire on the potentiality of this idea.

\section{Reducing resolutions}\label{RedRes}

Let $R$ be a ring, and ${\bf F}_{\bullet}=(F_i,\partial_i)_{i\in\NN}$ a complex of free $R$-modules, where $\NN$ is the set of natural numbers $\{0,1,2,\ldots \}$. For any $i\in\NN$ let $\Delta_i$ be a set to index the basis of $F_i$, that is:
\[F_i=\bigoplus_{\sigma \in \Delta_i}R\cdot \sigma.\]
With respect to such a basis, we can write the differentials $\partial_j:F_j\rightarrow F_{j-1}$ as:
\[1_\sigma\mapsto \sum_{\tau\in \Delta_{j-1}}[\sigma:\tau]\cdot 1_\tau,\]
where $[\sigma:\tau]$ is an element of the ring $R$.

We are going to describe a special case of the so-called Algebraic Discrete Morse Theory, an useful tool to reduce the size of ${\bf F}_{\bullet}$ (when possible). Let $\alpha\in \Delta_j$ and $\beta\in \Delta_{j-1}$ such that $[\alpha:\beta]$ is an {\it invertible element in $R$}. Then construct the following complex ${\bf \tilde{F}}_{\bullet}=(\tilde{F}_i,\tilde{\partial}_i)_{i\in\NN}$ of free $R$-modules:
\begin{align*}
\tilde{F}_i =
\begin{cases}
\bigoplus_{\sigma \in \Delta_j\setminus\{\alpha\}}R\cdot \sigma & \text { if } i=j,\\
\bigoplus_{\tau \in \Delta_{j-1}\setminus\{\beta\}}R\cdot \tau & \text { if } i=j-1,\\
F_i & \text {otherwise}
\end{cases}
\end{align*} 
The differentials $\tilde{\partial}_i:\tilde{F}_i\rightarrow \tilde{F}_{i-1}$ are defined as:
\begin{align*}
\tilde{\partial}_i(1_\sigma) =
\begin{cases}
\partial_i(1_\sigma) & \text { if } i\notin \{j+1,j\},\\
\sum_{\tau\in \Delta_j\setminus\{\alpha\}}[\sigma:\tau]\cdot 1_\tau& \text { if } i=j+1,\\
\sum_{\tau\in \Delta_{j-1}\setminus\{\beta\}}\left([\sigma:\tau]-\frac{[\sigma:\beta][\alpha:\tau]}{[\alpha:\beta]}\right)\cdot 1_\tau& \text { if } i=j
\end{cases}
\end{align*} 
The following lemma is a particular case of \cite[Theorem 2.2]{JW}.
\begin{lemma}\label{thelemma}
${\bf \tilde{F}}_{\bullet}=(\tilde{F}_i,\tilde{\partial}_i)_{i\in\NN}$ is a complex of free $R$-modules homotopically equivalent to  ${\bf F}_{\bullet}=(F_i,\partial_i)_{i\in\NN}$. In particular, 
\[H_i({\bf F}_{\bullet})\cong H_i({\bf \tilde{F}}_{\bullet}) \ \ \ \forall \ i\in\NN.\]
\end{lemma}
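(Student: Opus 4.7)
The plan is to exhibit an explicit change of basis that splits $\mathbf{F}_\bullet$ into the direct sum of $\tilde{\mathbf{F}}_\bullet$ and a contractible two-term complex concentrated in degrees $j$ and $j-1$. Since every complex is homotopy equivalent to its direct summand whose complement is contractible, this yields the claimed equivalence.

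First I would perform a column change of basis in $F_j$: keep $\alpha$, and for every $\sigma\in\Delta_j\setminus\{\alpha\}$ replace $\sigma$ by $\sigma':=\sigma-\frac{[\sigma:\beta]}{[\alpha:\beta]}\alpha$. A direct calculation shows that in this new basis $\partial_j(\sigma')=\sum_{\tau\in\Delta_{j-1}\setminus\{\beta\}}\left([\sigma:\tau]-\frac{[\sigma:\beta][\alpha:\tau]}{[\alpha:\beta]}\right)\tau$, i.e.\ the coefficient of $\beta$ is killed and the remaining coefficients are exactly those defining $\tilde{\partial}_j$. Next, I would perform a row change of basis in $F_{j-1}$: keep every $\tau\neq\beta$, and replace $\beta$ by $\beta':=\frac{1}{[\alpha:\beta]}\partial_j(\alpha)=\beta+\sum_{\tau\neq\beta}\frac{[\alpha:\tau]}{[\alpha:\beta]}\tau$. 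Then $\partial_j(\alpha)=[\alpha:\beta]\beta'$ by construction, while $\partial_j(\sigma')$ still has no $\beta'$-component.

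The next step is to check that the neighbouring differentials are compatible with the decompositions $F_j=R\alpha\oplus\bigoplus_{\sigma\neq\alpha}R\sigma'$ and $F_{j-1}=R\beta'\oplus\bigoplus_{\tau\neq\beta}R\tau$. For $\partial_{j+1}$, writing $\partial_{j+1}(\rho)$ in the new basis of $F_j$ gives a coefficient of $\alpha$ equal to $[\rho:\alpha]+\sum_{\sigma\neq\alpha}[\rho:\sigma]\frac{[\sigma:\beta]}{[\alpha:\beta]}$; this equals $[\alpha:\beta]^{-1}$ times the coefficient of $\beta$ in $\partial_j\partial_{j+1}(\rho)$, hence vanishes by $\partial^2=0$. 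The remaining coefficients are $[\rho:\sigma]$, matching $\tilde{\partial}_{j+1}$. For $\partial_{j-1}$, one has $\partial_{j-1}(\beta')=[\alpha:\beta]^{-1}\partial_{j-1}\partial_j(\alpha)=0$, again by $\partial^2=0$, while the differentials on the other $\tau$'s are unchanged.

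Combining these, $\mathbf{F}_\bullet$ decomposes as $A_\bullet\oplus\tilde{\mathbf{F}}_\bullet$, where $A_\bullet$ is the two-term complex $R\alpha\xrightarrow{[\alpha:\beta]}R\beta'$ placed in degrees $j,j-1$. Since $[\alpha:\beta]$ is a unit, $A_\bullet$ is isomorphic to the standard contractible complex, hence homotopy equivalent to $0$; consequently $\mathbf{F}_\bullet\simeq\tilde{\mathbf{F}}_\bullet$, and the isomorphism on homology follows at once. The only delicate point is the bookkeeping in the previous paragraph that uses $\partial^2=0$ to eliminate the $\alpha$-component of $\partial_{j+1}$ and to ensure $\partial_{j-1}(\beta')=0$; everything else is a routine matrix computation modelled on Gaussian elimination.
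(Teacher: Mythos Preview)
Your argument is correct. The change of basis you describe is precisely the Gaussian elimination step that splits off the contractible two-term summand $R\alpha\xrightarrow{[\alpha:\beta]}R\beta'$, and your use of $\partial^2=0$ to kill the $\alpha$-component of $\partial_{j+1}$ and to obtain $\partial_{j-1}(\beta')=0$ is exactly what is needed to make the decomposition respect the differentials. Once $\mathbf{F}_\bullet\cong A_\bullet\oplus\tilde{\mathbf{F}}_\bullet$ with $A_\bullet$ contractible, the homotopy equivalence and the homology isomorphism are immediate.

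The paper does not prove the lemma directly: it simply records that this is a special case of \cite[Theorem~2.2]{JW}, the main reduction theorem of algebraic discrete Morse theory, where one removes a single acyclic matching edge. Your route is therefore genuinely different in presentation: you give a self-contained, elementary linear-algebra argument (essentially one step of Gaussian elimination on chain complexes), whereas the paper appeals to the general Morse-theoretic machinery. The advantage of your approach is that it requires no external reference and makes the explicit formula for $\tilde{\partial}_j$ transparent; the advantage of citing \cite{JW} is that the same reference handles the iterated situation (many simultaneous reductions via an acyclic matching) in one stroke, which is ultimately what the algorithm in the paper exploits.
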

Let us remind that ${\bf F}_{\bullet}$ is a free resolution of an $R$-module $M$ if $H_i({\bf F}_{\bullet})=0$ for any $i>0$ and $H_0({\bf F}_{\bullet})\cong M$. Therefore the above lemma implies that, if ${\bf F}_{\bullet}$ is a free resolution of $M$, then ${\bf \tilde{F}}_{\bullet}$ is a free resolution of $M$ as well. 

We are interested in the case that $R$ is a polynomial ring over a field $\kk$ and ${\bf F}_{\bullet}$ is a graded free resolution of a finitely generated graded $R$-module $M$. In this case, ${\bf F}_{\bullet}$ is not minimal if and only if there exist $\alpha\in \Delta_j$ and $\beta\in \Delta_{j-1}$ such that $[\alpha:\beta]\in \kk^{\times}$. Therefore Lemma \ref{thelemma}  provides an iterative procedure to get the minimal graded free resolution of $M$ if some graded free resolution of $M$ is known a priori.

\section{Simplicial resolutions}\label{SimplRes}

Let $S=\kk[x_1,\ldots ,x_n]$ be the polynomial ring in $n$ variables over a field $\kk$, and $I\subseteq S$ be a monomial ideal generated by monomials $\underline{u}=u_1,\ldots ,u_r$. Let $\Delta$ be a (labeled) simplicial complex on the set $\{1,\ldots ,r\}=:[r]$, where the label of a face $\sigma\in\D$ is the monomial $m_{\sigma}\in S$, where:
\begin{equation}\label{eq:msigma}
m_\sigma:=\lcm(u_i|i\in \sigma)\in S.
\end{equation}
Let us remind that the reduced simplicial homology of $\Delta$ (with coefficients in $\kk$) is the homology of the complex of $\kk$-vector spaces
${\bf C_{\bullet}}(\Delta ;\kk)=(C_i,\partial_i)_{i=-1,\ldots ,d-1}$, where $d-1$ is the dimension of $\D$, 
\[C_i=\bigoplus_{\sigma}\kk\cdot \sigma\]
where the sum runs over the $i$-dimensional faces of $\D$, and the differentials are defined as:
\[1_\sigma\mapsto \sum_{v\in\sigma}\sign(v,\sigma)\cdot 1_{\sigma\setminus \{v\}},\]
where 
\begin{equation}\label{eq:sign}
\sign(v,\sigma):=(-1)^{q-1}\in\kk
\end{equation}
 if $v$ is the $q$th element of $\sigma$. By imitating this construction, let us consider the graded complex of free $S$-modules ${\bf F_{\bullet}}(\D;\underline{u})=(F_i,\partial_i)_{i=0,\ldots ,d}$, where 
\[F_i=\bigoplus_{\sigma}S(-\deg(m_{\sigma}))\]
where the sum runs over the $(i-1)$-dimensional faces of $\D$, and the differentials are defined as:
\[1_\sigma\mapsto \sum_{v\in\sigma}\sign(v,\sigma)\frac{m_{\sigma}}{m_{\sigma\setminus\{v\}}}\cdot 1_{\sigma\setminus \{v\}}.\]
Since any $v\in [r]$ is a vertex of $\D$, one has that $H_0({\bf F_{\bullet}}(\D;\underline{u}))=S/I$. Furthermore, because the differential are graded of degree 0, one can check that ${\bf F_{\bullet}}(\D;\underline{u})$ is a graded free resolution of $S/I$ if and only if $\D_{\leq u}$ is an acyclic simplicial complex for any monomial $u\in S$, where $\D_{\leq u}$ consists in the faces $\sigma\in\D$ such that $m_{\sigma}|u$ (cf. \cite[Lemma 2.2]{BPS}). We will call this kind of resolutions, introduced for the first time by Bayer, Peeva and Sturmfels in \cite{BPS}, {\it simplicial}.
In particular, if $\D=\K$ is the $(r-1)$-simplex, then ${\bf F_{\bullet}}(\K;\underline{u})$ is a graded free resolution of $S/I$: indeed in this case $\D_{\leq u}$ is the simplex on the vertices $i$ such that $u_i|u$, which is contractible. That ${\bf F_{\bullet}}(\K;\underline{u})$ was a resolution was proved, for the first time, by Taylor in \cite{Ta}.

\begin{definition}
With the above notation, ${\bf F_{\bullet}}(\K;\underline{u})$ is called the {\it Taylor resolution}.
\end{definition}

The Taylor resolution is far to be minimal in general: in fact, it tends to be huge. 
A smaller graded free resolution, still simplicial, was found by Lyubeznik in \cite{Ly}. For each $k\in [r]$ and $\sigma\subseteq [r]$, denote by 
\begin{equation}\label{eq:sigma>k}
\sigma_{>k}:=\{i\in \sigma:i>k\}.
\end{equation}
Let $\LL$ be the simplicial complex on $[r]$ consisting of those faces $\sigma\subseteq [r]$ such that $u_k$ does not divide $m_{\sigma_{>k}}$ for any $k\in[r]$. Then Lyubeznik proved in \cite{Ly} that ${\bf F_{\bullet}}(\LL;\underline{u})$ is a graded free resolution of $S/I$.

\begin{definition}
With the above notation, ${\bf F_{\bullet}}(\LL;\underline{u})$ is called the {\it Lyubeznik resolution}.
\end{definition}

Notice that the Lyubeznik resolution depends on the way in which we order the monomials $u_1,\ldots ,u_r$. Even if the Lyubeznik resolution tends to be much smaller than the Taylor resolution, it may happen (indeed it often happens) that it is not minimal for any order of the generators of $I$.  

\section{The algorithm}

Let $I\subseteq S=\kk[x_1,\ldots ,x_n]$ be a monomial ideal generated by monomials of positive degree $u_1,\ldots ,u_r$ ($r>0$). If $\sigma=\{i_1,\ldots ,i_s\}$ and $\tau=\{j_1,\ldots ,j_s\}$ are subsets of $[r]$ of the same cardinality, by $\sigma<\tau$ we mean that $\sigma$ is less than $\tau$ lexicographically, i.e. that there is $k$ such that $i_k<j_k$ and $i_p=j_p$ for all $p<k$. For any $i\in[r]$,
define the following set:
\begin{equation}\label{eq:lyu}
\Lyu_i:=\{\sigma\subseteq [r]:|\sigma|=i \mbox{ and }\exists \ k\in[r]: u_k|m_{\sigma_{>k}}\},
\end{equation}
where $\sigma_{>k}$ was defined in \eqref{eq:sigma>k}. Construct the graph $G(I)$ as follows:
\begin{itemize}
\item[(i)] The set of vertices of $G(I)$ is $\D_0\cup \D_1$ where 
\[\D_i:=\{\sigma\subseteq [r]:|\sigma|=i, \ \sigma\notin \Lyu_i\} \ \ \ \forall \ i=0,1.\]
\item[(ii)] There are no edges.
\end{itemize}
The purpose of the following algorithm is to iteratively transform the weighted graph $G(I)$ in a way that will provide 
the Betti table of $S/I$ (see sections \ref{RedRes} and \ref{SimplRes}). During the algorithm, the vertices of $G(I)$ 
will be always partitioned in 2 sets, namely $\D_i$ and $\D_{i+1}$. The main loop of the algorithm works as follows.
If either $\D_i$ or $\D_{i+1}$ is empty, the algorithm stops. Otherwise, the set of vertices $\D_{i}$ is stored and deleted, 
and a new set of vertices $\D_{i+2}$ is constructed together with a set of weighted edges $\D_{i+1}^{i+2}$, 
going from $\D_{i+2}$ to $\D_{i+1}$. Both sets $\D_{i+2}$ and $\D_{i+1}^{i+2}$ continually change, and eventually 
the edges of $\D_{i+1}^{i+2}$ will have weight $0$. At the end of each loop, the counter $i$ is increased by one.
In the worst case, when $i=\min\{n,r\}$, the set $\D_{i+1}$ is empty, so the algorithm stops.

\begin{remark}
With the above notation, the following inequalities follow by the Taylor's resolution:
\[\pdim(S/I)\leq \min\{r,n\} \ \ \ \mbox{and} \ \ \ \reg(S/I)\leq \pdim(S/I)\cdot  \max_i\{\deg(u_i)-1\}\]
In particular, the Betti table of $S/I$ has size at most $a(I)\times b(I)$, where $a(I):=\min\{r,n\}+1$ and $b(I):=\min\{r,n\}\cdot\max_i\{\deg(u_i)-1\}+1$.
\end{remark}


\begin{algorithm}
Let $\D_0, \D_1, a(I), b(I)$ be defined as above; set $i:=0$, $V_1:=\D_1$, and 
let $\BB$ be the $(a(I)\times b(I))$-matrix with all zero entries.

\begin{description}

\item[S1] Let $B_i$ be the $(a(I)\times b(I))$-matrix whose entries are all zero except for the $(i+1)$-th column: the $(j,i+1)$-th entry will be 
\[|\{\sigma\in \Delta_{i}:\deg(m_\sigma)=i+j-1\}|.\]
Analogously, let $B_{i+1}$ be the $(a(I)\times b(I))$-matrix whose entries are all zero except for the $(i+2)$-th column: the $(j,i+2)$-th entry will be 
\[|\{\sigma\in \Delta_{i+1}:\deg(m_\sigma)=i+j\}|.\]

\item[S2] {\bf If} $\bm{i=r-1}$ then set $\BB:=\BB+B_i+B_{i+1}$ and stop;\\
{\bf else if} $\bm{i=n}$ then set $\BB:=\BB+B_i$ and stop;\\
{\bf else if} $\bm{\D_i=\emptyset}$ then stop;\\
{\bf else if} $\bm{\D_{i+1}=\emptyset}$ then set $\BB:=\BB+B_i$ and stop;\\
{\bf else} set $\BB:=\BB+B_i$ and remove the set of vertices $\Delta_{i}$ from $G(I)$. 
Change the weighted graph $G(I)$ as described in the procedure {\bf The deformation of $G(I)$} below, 
set $i := i+1$ and go to step {\bf S1}.
\end{description}
\end{algorithm}

\bigskip

\noindent {\bf The deformation of $G(I)$}

\vspace{1mm}

\noindent Set $V_{i+2}:=\{\tau\cup\{v\}:\tau\in V_{i+1}, v\in [r]\setminus\tau\}$, $\D_{i+2}:=\emptyset$
and $\D_{i+1}^{i+2}:=\emptyset$. Iteratively transform $\D_{i+2}$ and $\D_{i+1}^{i+2}$ by lexicographically 
running on the elements $\sigma\in V_{i+2}$ in increasing order as follows:

\begin{itemize}
\item If $\sigma\in \Lyu_{i+2}$, then $V_{i+2}:=V_{i+2}\setminus \{\sigma\}$ and go to the next $\sigma$.

\item Otherwise, construct the following sets:
\begin{eqnarray*}
(\D_{i+1}^{\sigma})_1 &:=&\{\sigma\setminus \{v\} \in\D_{i+1} : v \in \sigma, m_{\sigma\setminus \{v\}} = m_\sigma
 \mbox{ and } (\sigma,\sigma \setminus \{v\})\notin \D_{i+1}^{i+2}\}\\
(\D_{i+1}^{\sigma})_2 &:=&\{\beta \in \Delta_{i+1}: (\sigma,\beta)\in\D_{i+1}^{i+2}\} \\
(\D_{i+1}^{\sigma})_3 &:=&\{\beta \in(\D_{i+1}^{\sigma})_2:[\sigma:\beta]\neq 0\}
\end{eqnarray*}
\item If $\D_{i+1}^{\sigma}:= (\D_{i+1}^{\sigma})_1\cup (\D_{i+1}^{\sigma})_3 = \emptyset$, then set 
$\D_{i+2}:=\D_{i+2}\cup\{\sigma\}$ and go to the next $\sigma$.
\item Otherwise, for every $\beta \in (\D_{i+1}^{\sigma})_1$, set
$[\sigma:\beta]:=\sign(\sigma \setminus \beta, \beta)$
(see \eqref{eq:sign}).\\
Let $\tau$ be the smallest element of $\D_{i+1}^{\sigma}$. Reset $\D_{i+1}^{\sigma}:=\D_{i+1}^{\sigma}\setminus \{\tau\}$. \\
Set $\D^{i+2}_{\tau}:=(\D^{i+2}_{\tau})_1\cup (\D^{i+2}_{\tau})_3$, where:
\begin{eqnarray*}
(\D^{i+2}_{\tau})_1 &:=&\{\alpha = \tau \cup \{v\}: v \in [r]\setminus \tau , \alpha > \sigma , \ u_v|m_\sigma , \ \alpha\notin \Lyu_{i+2}\} \\
(\D^{i+2}_{\tau})_2 &:=&\{\alpha \in \Delta_{i+2}:\alpha > \sigma , (\alpha,\tau)\in\D_{i+1}^{i+2}\} \\
(\D^{i+2}_{\tau})_3 &:=&\{\alpha \in(\D^{i+2}_{\tau})_2:[\alpha,\tau]\neq 0\} 
\end{eqnarray*}
If $\alpha \in (\D^{i+2}_{\tau})_1\setminus (\D^{i+2}_{\tau})_2$, define $[\alpha:\tau]:=\sign(\alpha \setminus \tau ,\alpha)$. \\
Set $\Delta_{i+2}:=\D_{i+2}\cup (\D^{i+2}_{\tau})_1$ and, for all $(\alpha , \beta)\in \D^{i+2}_{\tau}\times \D_{i+1}^{\sigma}$, procede as follows:
\begin{itemize}
\item If $(\alpha,\beta)\in\D_{i+1}^{i+2}$, then $[\alpha,\beta]:=[\alpha:\beta]-\frac{[\alpha:\tau]\cdot [\sigma:\beta]}{[\sigma:\tau]}$.
\item If $(\alpha,\beta)\notin\D_{i+1}^{i+2}$, then $\D_{i+1}^{i+2}:=\D_{i+1}^{i+2} \cup \{(\alpha,\beta)\}$ and:
\begin{itemize}
\item If $\alpha\supseteq \beta$, then $[\alpha,\beta]:=\sign(\alpha\setminus\beta,\alpha)-\frac{[\alpha:\tau]\cdot [\sigma:\beta]}{[\sigma:\tau]}$.
\item Otherwise, $[\alpha,\beta]:=-\frac{[\alpha:\tau]\cdot [\sigma:\beta]}{[\sigma:\tau]}$.
\end{itemize}
\end{itemize}
Set $\D_{i+2}:=\D_{i+2}\setminus (\{\sigma\}\cap\D_{i+2})$ and $\D_{i+1}:=\D_{i+1}\setminus \{\tau\}$. Put
\[\D_{i+1}^{i+2}:=\D_{i+1}^{i+2}\setminus (\{(\alpha,\tau):\alpha\in (\D^{i+2}_{\tau})_2\}\cup \{(\sigma,\beta):\beta\in (\D_{i+1}^{\sigma})_2\}) \]
and go to the next $\sigma$.

\end{itemize}

\bigskip

\begin{remark}
Note that the above algorithm can be modified to provide the multigraded Betti numbers of $S/I$: namely, given a vector $\bm{a}=(a_1,\ldots ,a_n)\in\ZZ^n$, 
\[\beta_{i,\bm{a}}(S/I)=|\{\sigma\in\D_i: m_{\sigma}=x_1^{a_1}\cdots x_n^{a_n}\}| \ \ \ \forall \ i\in \NN,\]
where $\D_i$ is taken at the step $i$.
\end{remark}

From the algorithm described above one can immediately infer some interesting upper/lower bounds for the depth/Castelnuovo-Mumford regularity of $S/I$. With the notation above, let $I=(u_1,\ldots ,u_r)\subseteq S=\kk[x_1,\ldots ,x_n]$. We say that a subset $\sigma\subseteq [r]$ is {\it critical (for $I$)} if: 
\[m_{\sigma}\neq m_{\sigma\setminus\{v\}} \ \forall \ v\in\sigma \ \ \ \mbox{ and } \ \ \ m_{\sigma}\neq m_{\sigma\cup\{v\}} \ \forall \ v\in[r]\setminus \sigma.\]
We introduce the following two invariants of a monomial ideal $I$:
\begin{eqnarray}
p(S/I):= & \max\{|\sigma|:\sigma \mbox{ is critical}\} \\
r(S/I):= & \max\{\deg(m_{\sigma})-|\sigma|:\sigma \mbox{ is critical}\}
\end{eqnarray}

\begin{proposition}
If $\sigma$ is a critical set for $I$, then $\beta_{|\sigma|,\deg(m_{\sigma})}(S/I)\neq 0$. In particular:
\[\depth(S/I)\leq n-p(S/I) \ \ \ \mbox{ and } \ \ \ \reg(S/I)\geq r(S/I).\]
\end{proposition}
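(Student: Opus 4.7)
The plan is to work with the Taylor resolution $T_\bullet={\bf F_{\bullet}}(\K;\underline{u})$ from Section~\ref{SimplRes} and exhibit a nonzero class in $\mathrm{Tor}^S_{|\sigma|}(S/I,\kk)$ in the multidegree of $m_\sigma$. Writing $T_i=\bigoplus_{|\tau|=i}S(-\deg(m_\tau))\cdot e_\tau$, the piece of $T_i\otimes_S\kk$ in multidegree $m_\sigma$ is spanned by the classes $[e_\tau]$ with $|\tau|=i$ and $m_\tau=m_\sigma$, since $\kk(-\deg(m_\tau))$ contributes to this multidegree only in that case. Moreover, the induced differential sends $[e_\tau]$ to $\sum_v\pm[e_{\tau\setminus\{v\}}]$, summed over those $v\in\tau$ with $m_{\tau\setminus\{v\}}=m_\tau$, since the remaining coefficients $m_\tau/m_{\tau\setminus\{v\}}$ lie in the irrelevant maximal ideal $\mathfrak{m}=(x_1,\ldots,x_n)$ and vanish after tensoring with $\kk$.

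First I would check that $[e_\sigma]$ is a cycle: by the first criticality condition, $m_{\sigma\setminus\{v\}}\neq m_\sigma$ for every $v\in\sigma$, so every summand of $\partial(e_\sigma)$ has coefficient in $\mathfrak{m}$, and therefore $[e_\sigma]\in\Ker(\partial)$. Next I would verify that $[e_\sigma]$ is not a boundary. A basis of $(T_{|\sigma|+1}\otimes_S\kk)$ in multidegree $m_\sigma$ is given by the classes $[e_\alpha]$ with $|\alpha|=|\sigma|+1$ and $m_\alpha=m_\sigma$. For such an $\alpha$, either $\alpha\supseteq\sigma$, in which case $\alpha=\sigma\cup\{v\}$ for some $v\notin\sigma$ and the second criticality condition yields $m_\alpha\neq m_\sigma$, contradicting the choice of $\alpha$; or $\alpha\not\supseteq\sigma$, in which case $\sigma\neq\alpha\setminus\{v\}$ for every $v\in\alpha$, so $\partial(e_\alpha)$ has no $e_\sigma$-component. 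Hence no $\kk$-linear combination of the $[e_\alpha]$ can map to $[e_\sigma]$, and $[e_\sigma]$ represents a nonzero class in $H_{|\sigma|}(T_\bullet\otimes_S\kk)=\mathrm{Tor}^S_{|\sigma|}(S/I,\kk)$ in multidegree $m_\sigma$. Since the total-degree Betti number is obtained by summing the multigraded ones over multidegrees of the same total degree, this yields $\beta_{|\sigma|,\deg(m_\sigma)}(S/I)\neq 0$.

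The two bounds then follow mechanically. The existence of a critical set of cardinality $p(S/I)$ forces $\pdim(S/I)\geq p(S/I)$, and the Auslander-Buchsbaum formula $\pdim(S/I)+\depth(S/I)=n$ gives $\depth(S/I)\leq n-p(S/I)$. Similarly, $\reg(S/I)=\max\{j-i:\beta_{i,j}(S/I)\neq 0\}\geq \deg(m_\sigma)-|\sigma|$ for every critical $\sigma$, hence $\reg(S/I)\geq r(S/I)$. I do not foresee any substantive obstacle: the one point requiring care is that potential boundaries in multidegree $m_\sigma$ could a priori come from $\alpha$'s other than those of the form $\sigma\cup\{v\}$, but this is precisely handled by the case split on whether $\alpha$ contains $\sigma$.
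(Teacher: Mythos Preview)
Your argument is correct. The key observation---that in the multidegree of $m_\sigma$ the Taylor complex tensored with $\kk$ has $[e_\sigma]$ as a cycle with no possible preimage, because the only $(|\sigma|+1)$-subsets whose boundary could involve $e_\sigma$ are of the form $\sigma\cup\{v\}$ and these all have strictly larger multidegree by the second criticality condition---is exactly the right one, and your case split on whether $\alpha\supseteq\sigma$ cleanly disposes of the worry you flag at the end.

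The paper itself does not supply a written proof of this proposition; it is stated immediately after the description of the algorithm with the remark that such bounds ``can immediately be inferred'' from it. The intended reasoning is presumably that a critical $\sigma$ lies in the Lyubeznik complex (one checks easily that criticality forces $\sigma\notin\Lyu_{|\sigma|}$) and can never be eliminated during the reduction, since every pair removed in the deformation consists of faces with equal $m$-label. Your approach bypasses the algorithm entirely and works directly with the Taylor resolution and $\mathrm{Tor}$; this is shorter, self-contained, and does not require tracking how coefficients evolve through the reduction steps. The algorithmic viewpoint, on the other hand, makes it transparent that the basis element indexed by a critical $\sigma$ literally survives into the minimal free resolution.
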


As a special case, we recover a result of Katzman \cite{Ka}: let $G$ be a simple graph of $n$ vertices and $I=(x_ix_j:\{i,j\} \mbox{ is an edge of }G)$. He noticed that, if $G$ has an induced subgraph $H$ consisting of $t$ disjoint edges, then $\reg(S/I)\geq t$. Clearly, in such a situation the set $\sigma$ corresponding to the edges in $H$ is critical for $I$, so that $r(S/I)\geq \deg(m_{\sigma})-|\sigma|=t$.

\begin{remark}\label{rem:simplicial homology}
To compute the multigraded Betti numbers of a monomial ideal, we can always reduce ourselves to square-free monomial ideals by polarization. A square-free monomial ideal $I=(u_1,\ldots ,u_r)\subseteq S$ is associated to a simplicial complex $\D$ on $n$ vertices via the Stanley-Reisner correspondence. Thanks to the Hochster's formula (cf. \cite[Corollary 5.12]{MS}), to know the multigraded Betti numbers of $S/I$ is equivalent to know the simplicial homology with coefficients in $\kk$ of all the $2^n$ induced subcomplexes of $\D$. 

If one is only interested in the simplicial homology of $\D$, it is enough to know the multigraded Betti numbers $\beta_{i,(1,\ldots ,1)}(S/I)$, more precisely:
\[\dim_{\kk}\widetilde{H}_i(\D;\kk)=\beta_{n-i-1,(1,\ldots ,1)}(S/I)\]
Being our algorithm ``local'', to know such multigraded Betti numbers is not necessary to compute the minimal free resolution of $S/I$ completely. In fact, it is possible to implement a faster version of the algorithm aimed at the computation of the simplicial homology with coefficients in a field. Already several algorithms are available for this task, with applications, for example, in network theory. Nevertheless, we wish to study the potentiality of such an algorithm in a forthcoming paper.

To give a preview of how such an algorithm would work, let $\{G_1,\ldots ,G_r\}$ be the set of minimal nonfaces of $\D$. Essentially, one should perform the ``deformation of $G(I)$" only on the sets of vertices
\[V_i=\{\sigma\subseteq [r]\setminus \Lyu_i:|\sigma|=i  \mbox{ and } \bigcup_{j\in\sigma}G_j=[n]\} \ \ \ i=n-\dim \D ,\ldots ,n.\]
Then $\dim_{\kk}\widetilde{H}_i(\D;\kk)$ will be the cardinality of the set $\D_{n-i-1}$ (with the same notation of the algorithm) at the end of the deformation.
A remarkable feature is that to compute the simplicial homology in this way, one needs as input the minimal nonfaces of the simplicial complex, rather than its facets. This fact is convenient in certain situations. For example, in many cases is of interest to compute the homology of the clique complex of a given graph: while the facets of such a simplicial complex are the maximal cliques of the graph, its minimal nonfaces are simply the pair of nonadjacent vertices, which are easier to store if only the graph is given (as it typically happens in network theory).
\end{remark}


\section{Experiments}

In this section we present some examples to show the effectiveness of our algorithm. 
We implemented it using the C++ language and some routines available in CoCoALib,
a GPL C++ library which is the mathematical kernel for the computer algebra system CoCoA-5
\cite{cocoa}. All computations are performed on an Intel Core 2 i5 processor (at 1.4 GHz)
using both our new algorithm and the function \emph{BettiDiagram} included in CoCoA-4.
We are not comparing our timings with CoCoA-5 itself because the implementation for minimal free resolutions 
has not yet been optimized in the current version CoCoA-5.1.1.
We perform three experimental tests: in the first case we consider monomial ideals of degree $d=2$
(results summarized in Table 1.(a)); in the second case we consider monomial ideals of 
degree $d=4$ (results summarized in Table 1.(b)); in the third case we consider monomial 
ideals of degree $d$, with $5 \le d \le 8$ (results summarized in Table 1.(c)). 

Tables 1.(a), 1.(b) and 1.(c) consist of four columns: the first column, labeled with 
$n$, contains the number of variables of the polynomial ring; the second column, labeled with~$r$, 
contains the number of minimal generators of the monomial ideal; the last two columns, labeled with ``New"
and ``CoCoA", contain the average time  to compute the Betti tables of $10$
monomial ideals using our new algorithm and the software CoCoA-4 respectively. 
In all tested numerical experiments our algorithm reveals to be more efficient than CoCoA-4, 
since it returns the Betti tables in a shorter computational time. 
We also performed some comparisons of our results with computations done
using the software Macaulay-2 (see \cite{M2}), in which case the computational    
 timings are comparable, with variations depending on the size of the example. In our opinion, though, by accurately refining the implementation, the efficiency of our algorithm has substantial room for improvement.
 
 \begin{table}[htb]
\centering
\subtable[Monomial ideals of degree $d=2$. \label{table1}]{%
\begin{tabular}{|c|c|c|c|c|c|}\cline{3-4}
\multicolumn{2}{c|}{}& \multicolumn{2}{|c|}{\bf Timings}\\
 \hline
 $n$ & $r$ & New &  CoCoA\\
 \hline
 $10$ & $8$ & 0.001 &  0.02 \\
 $10$ & $10$ & 0.005 &  0.04 \\
 $10$ & $12$ & 0.014 &  0.11\\
 \hline
 $15$ & $12$ & 0.023 &  0.19 \\
 $15$ & $15$ & 0.177 & 1.32 \\
 $15$ & $18$ & 0.512  & 2.33 \\
 \hline
 $18$ & $18$ & 2.111  & 13.14 \\
 $18$ & $20$ &  9.608   & 32.87 \\
\hline
\end{tabular}}
\qquad 
\subtable[Monomial ideals of degree $d=4$. \label{table2}]{%
\begin{tabular}{|c|c|c|c|c|c|}\cline{3-4}
\multicolumn{2}{c|}{}& \multicolumn{2}{|c|}{\bf Timings}\\
 \hline
 $n$ & $r$ & New & CoCoA\\
 \hline
 $10$ & $8$ & 0.002 &  0.02 \\
 $10$ & $10$ & 0.004 &  0.05 \\
 $10$ & $12$ & 0.012 & 0.11 \\
 \hline
 $15$ & $12$ & 0.029  & 0.43 \\
 $15$ & $15$ & 0.208  & 2.91 \\
 $15$ & $18$ &  0.786 & 9.37 \\
 \hline
 $18$ & $18$ &  3.282  & 78.88\\
 $18$ & $20$ &        24.194      & 158.27\\
\hline
\end{tabular}
}
\qquad 
\subtable[Monomial ideals of degree $d$, with $5 \le d \le 8$. \label{table2}]{%
\begin{tabular}{|c|c|c|c|c|c|}\cline{3-4}
\multicolumn{2}{c|}{}& \multicolumn{2}{|c|}{\bf Timings}\\
 \hline
 $n$ & $r$ & New & CoCoA\\
 \hline
 $10$ & $8$ & 0.002 & 0.04  \\
 $10$ & $10$ & 0.007 & 0.09 \\
 $10$ & $12$ & 0.014  & 0.23 \\
 \hline
 $15$ & $12$ & 0.035  & 0.81 \\
 $15$ & $15$ & 0.261 & 7.51 \\
 $15$ & $18$ & 1.247 & 58.84\\
 \hline
 $18$ & $18$ &  4.064 &  271.5 \\
 $18$ & $20$ &  63.00 &   482.73\tablefootnote{this average is made on 7 tests out of 10, since in 3 experiments the computation ran out of memory}  \\
\hline
\end{tabular}}
\caption{\small{Tests on monomial ideals of degree $d=2$, $d=4$ and $5 \le d \le 8$}}\label{table}
\end{table}

\section{A vanishing inspired by the algorithm}

In the last section of the paper we prove a vanishing result arisen while developing the algorithm. 
Before stating it, for the convenience of the reader we recall how to get the {\it Mayer-Victories spectral sequence} for simplicial homology:

\begin{remark}
Let $\D$ be a simplicial complex on $[n]$, and $\D_0,\ldots ,\D_s$ be subcomplexes such that
\[\D=\bigcup_{i=0}^s\D_i.\]
For any sequence of integers $0\leq a_0<\ldots <a_p\leq s$, let us denote by 
\[\D_{a_0,\ldots,a_p}=\bigcap_{k=0}^p\D_{a_k}.\]
Fixed a ring $R$, for any simplicial complex $\Gamma$ we denote by ${\bf C_{\bullet}}(\Gamma;R)$ the complex of $R$-modules defined as in Section \ref{SimplRes} in the particular case that $R$ was the field $\kk$,
and by ${\bf C^{\bullet}}(\Gamma;R):=\Hom_R({\bf C_{\bullet}}(\Gamma;R))$. One can check that the following is an exact sequence of complexes of $R$-modules:
\[0\rightarrow {\bf C^{\bullet}}(\D;R)\xrightarrow{d^0} \C^{\bullet,0}(R)\xrightarrow{d^1} \C^{\bullet,1}(R)\xrightarrow{d^2} \cdots \xrightarrow{d^{s-1}} \C^{\bullet,s-1}(R)\xrightarrow{d^s} \C^{\bullet,s}(R)\rightarrow 0,\]
where $\C^{\bullet,p}(R)=\bigoplus_{a_0<\ldots <a_p}{\bf C^{\bullet}}(\D_{a_0,\ldots ,a_p};R)$ and, for a given $j$, if $\alpha=(\alpha_{a_0,\ldots,a_p})_{a_0<\ldots<a_p}$ is an element of $\C^{j,p}(R)$, the $(b_0,\ldots ,b_{p+1})$th component of $d^{p+1}(\alpha)\in \C^{j,p+1}(R)$ is:
\[\sum_{k=0}^{p+1}(-1)^{k}\left.\left(\alpha_{b_0,\ldots ,\widehat{b_k},\ldots ,b_{p+1}}\right)\right|_{C^j(\D_{b_0,\ldots ,b_{p+1}})}\]
(considering $\D$ as $\D_{\emptyset}$).
Let us consider the double complex of $R$-modules $\C^{\bullet,\bullet}(R)$ where the differentials 
\[\partial^p:\C^{p-1,q}(R)\rightarrow \C^{p,q}(R),\]
for a given natural number $q$, are the ones used to define the simplicial cohomology of $\D_{a_0,\ldots,a_q}$ with coefficients in $R$. By the general theory of spectral sequences arising from double complexes (c.f. \cite[Chapter III.7, Proposition 10]{GM}), we get the following spectral sequence:
\begin{eqnarray}\label{MVspectral}
\bigoplus_{0\leq a_0<\ldots <a_p\leq s}H^q\left(\D_{a_0\ldots ,a_p};R\right)\implies H^{p+q}(\D;R).
\end{eqnarray}
\end{remark}

Given two elements $\bm{a}=(a_1,\ldots ,a_n)$ and $\bm{b}=(b_1,\ldots ,b_n)$ of $\ZZ^n$, for $\bm{a}\leq \bm{b}$ we mean $a_i\leq b_i$ for all $i\in[n]$. Furthermore, for $\bm{a}< \bm{b}$ we mean $\bm{a}\leq \bm{b}$ and $\bm{a}\neq \bm{b}$.

\begin{theorem}\label{thm:subadditivity}
Let $I=(u_1,\ldots ,u_r)$ be a monomial ideal of $S=\kk[x_1,\ldots ,x_n]$, and let $\bm{a_{\ell}}=\deg_{\ZZ^n}(u_{\ell})$ for all $\ell\in[r]$. For any $\ell\in[r]$, $i\in \NN$ and $\bm{a}\in\ZZ^n$:
\[\beta_{i,\bm{b}}(S/I)=0 \ \ \forall \ \bm{a}\leq \bm{b}<\bm{a}+\bm{a_{\ell}} \ \ \ \implies \ \ \ \beta_{i+1,\bm{a}+\bm{a_{\ell}}}(S/I)=0. \] 
In particular, if $d=\max_{\ell}\{\deg_{\ZZ}(u_{\ell})\}$, for any $j\in\ZZ$:
\[\beta_{i,k}(S/I)=0 \ \ \forall \ k=j,\ldots ,j+d-1 \ \ \ \implies \ \ \ \beta_{i+1,j+d}(S/I)=0. \] 
\end{theorem}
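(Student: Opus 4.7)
The plan is to polarize $I$ so as to reduce to the squarefree case, reformulate the desired vanishing via Hochster's formula, and apply the Mayer--Vietoris spectral sequence of the preceding Remark to a cover of the relevant induced subcomplex that is dictated by the generator $u_\ell$.

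After polarization (as in Remark \ref{rem:simplicial homology}) I may assume that $I$ is the Stanley--Reisner ideal of $\Delta := \{F \subseteq [n] : x^F \notin I\}$, that $u_\ell = x^{S_\ell}$ is squarefree, and that $\bm{c} := \bm{a} + \bm{a_\ell}$ is squarefree; in particular $\operatorname{supp}(\bm{a})$ and $S_\ell := \operatorname{supp}(\bm{a_\ell})$ are disjoint. By Hochster's formula the goal $\beta_{i+1,\bm{c}}(S/I) = 0$ translates into the topological statement
\[
\tilde{H}^{|\operatorname{supp}(\bm{c})| - i - 2}\bigl(\Delta|_{\operatorname{supp}(\bm{c})}; \kk\bigr) = 0.
\]
I cover $\Delta|_{\operatorname{supp}(\bm{c})}$ by the induced subcomplexes $\Delta_j := \Delta|_{\operatorname{supp}(\bm{c}) \setminus \{j\}}$ for $j \in S_\ell$. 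This is a genuine cover because no face $F$ of $\Delta$ can contain all of $S_\ell$: otherwise $u_\ell = x^{S_\ell}$ would divide $x^F \notin I$, a contradiction. For nonempty $T \subseteq S_\ell$ one computes $\bigcap_{j \in T} \Delta_j = \Delta|_{\operatorname{supp}(\bm{b}(T))}$, where $\bm{b}(T) := \bm{c} - e_T$ lies in the range $\bm{a} \leq \bm{b}(T) < \bm{c}$.

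Feeding this cover into the Mayer--Vietoris spectral sequence
\[
E_1^{p,q} = \bigoplus_{|T| = p+1} \tilde{H}^q\bigl(\Delta|_{\operatorname{supp}(\bm{b}(T))}; \kk\bigr) \;\Longrightarrow\; \tilde{H}^{p+q}\bigl(\Delta|_{\operatorname{supp}(\bm{c})}; \kk\bigr),
\]
a short index check using Hochster on each $\bm{b}(T)$ identifies every summand on the antidiagonal $p + q = |\operatorname{supp}(\bm{c})| - i - 2$ with $\beta_{i,\bm{b}(T)}(S/I)$, which vanishes by hypothesis; hence the limit vanishes, proving the multigraded statement. For the coarsely graded ``in particular,'' pick any $\bm{c}$ with $|\bm{c}| = j+d$ and $\beta_{i+1,\bm{c}}(S/I) \neq 0$; the Taylor bound forces $\bm{a_\ell} \leq \bm{c}$ for some $\ell$, and with $\bm{a} := \bm{c} - \bm{a_\ell}$ every $\bm{b} \in [\bm{a}, \bm{c})$ has total degree in $[j, j+d-1]$, so the hypothesis combined with the multigraded result produces a contradiction.

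The main obstacle I foresee is the bookkeeping: the homological index on the Mayer--Vietoris antidiagonal must align with the Hochster exponent so that exactly the Betti numbers $\beta_{i,\bm{b}}$ with $\bm{b} \in [\bm{a}, \bm{c})$ are what is required. A secondary, but routine, point is that polarization respects the hypothesis — by the symmetry among the polarized copies of each variable, every $\bm{b}(T)$ depolarizes into the range $[\bm{a}, \bm{c})$ of the original multigrading, where the hypothesis is available.
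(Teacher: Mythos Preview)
Your proposal is correct and follows essentially the same route as the paper's proof: polarize to the squarefree case, translate via Hochster's formula, cover the induced subcomplex $\Delta|_{\operatorname{supp}(\bm{c})}$ by the subcomplexes obtained by deleting one vertex from $S_\ell=\operatorname{supp}(\bm{a_\ell})$ at a time, and read off the desired vanishing from the Mayer--Vietoris spectral sequence of this cover. Your index check on the antidiagonal $p+q=|\operatorname{supp}(\bm{c})|-i-2$ matches the paper's computation, and you additionally spell out the ``in particular'' reduction (choosing $\ell$ with $\bm{a_\ell}\leq\bm{c}$ via the Taylor resolution), which the paper leaves implicit.
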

\begin{proof}
It is not difficult to see that, by polarizing, one can reduce himself to the square-free case (though one has to be a bit careful because by polarizing the multi-degree changes). In such a situation, for any $s\in\NN$, $\beta_{s,\bm{c}}(S/I)\neq 0$ only if $\bm{c}=(c_1,\ldots ,c_n)$ is a square-free vector, that is $c_j\in \{0,1\}$ for any $j\in [n]$. It is convenient to think at such square-free vectors as subsets of $[n]$, namely 
\[\bm{c} \longleftrightarrow \sigma(\bm{c})=\{j\in[n]:c_j=1\}.\]
Notice that $\bm{c_1}\leq \bm{c_2}$ if and only if $\sigma(\bm{c_1})\subseteq \sigma(\bm{c_2})$.
Let $\alpha$, $\alpha_{\ell}$ and $\gamma=\alpha\cup\alpha_{\ell}$ be the subsets of $[n]$ corresponding to, respectively, $\bm{a}$, $\bm{a_{\ell}}$ and $(\bm{a}+\bm{a_{\ell}}) (\mathrm{mod}2)$. Furthermore, let $\Delta$ be the simplicial complex such that $I=I_{\Delta}$ and set $\Gamma=\Delta_{\gamma}$. Notice that our vanishing hypothesis can be written as:
\[\beta_{i,\beta}(S/I)=0 \ \ \forall \ \alpha\subseteq \beta \subsetneq \gamma.\] 
By Hochster's formula, this is equivalent to say:
\begin{equation}\label{eq:vanishing}
H^{|\beta|-i-1}(\Gamma_{\beta};\kk)=0 \ \ \forall \ \alpha\subseteq \beta \subsetneq \gamma.
\end{equation}
Since $\alpha_{\ell}$ is not a face of $\Delta$,
\[\bigcup_{v\in \alpha_{\ell}}\Gamma_{\gamma\setminus\{v\}}=\Gamma.\]
Obviously, for all subsets $\delta$ of $\alpha_{\ell}$:
\[\bigcap_{v\in \delta}\Gamma_{\gamma\setminus\{v\}}=\Gamma_{\gamma\setminus\delta}.\]
Therefore the Mayer-Vietoris spectral sequence \eqref{MVspectral} with respect to the covering $(\Gamma_{\gamma\setminus\{v\}})_{v\in\alpha_{\ell}}$ of $\Gamma$ is:
\[\bigoplus_{\substack{\delta\subseteq \alpha_{\ell} \\ |\delta|=p+1}}H^q(\Gamma_{\gamma\setminus\delta};\kk)\implies H^{p+q}(\Gamma;\kk).\]
So, by \eqref{eq:vanishing} we infer that $H^{|\gamma|-i-2}(\Gamma;\kk)=0$. We conclude by using Hochster's formula that
\[\beta_{i+1,\gamma}(S/I)=0.\]
\end{proof}

\begin{remark}
As a particular case of the above theorem, we get:
\[
\beta_{i,k}(S/I)=0 \ \ \forall \ k\geq j \ \ \ \implies \ \ \ \beta_{i+1,k+d}(S/I)=0 \ \ \forall \ k\geq j. \\
\]
This fact has recently been proved by Herzog and Srinivasan in \cite[Corollary 4]{HS}. A further consequence is the following result of Fern\'andez-Ramos and Gimenez \cite[Theorem 2.1]{FG}: if $I$ is the edge ideal of a graph, then $\beta_{i,j}(S/I)=\beta_{i,j+1}(S/I)=0  \implies  \beta_{i+1,j+2}(S/I)=0$.
\end{remark}

\end{document}